\date{\today}
\newcommand{\bbD}{{\mathbb{D}}}
\newcommand{\bbR}{{\mathbb{R}}}
\newcommand{\bbZ}{{\mathbb{Z}}}
\newcommand{\bbT}{{\mathbb{T}}}
\newcommand{\cG}{{\mathcal{G}}}
\newcommand{\cR}{{\mathcal{R}}}
\renewcommand{\a}{\alpha}
\newcommand{\be}{\beta}
\newcommand{\ga}{\gamma}
\newcommand{\de}{\delta}
\newcommand{\ep}{\varepsilon}
\newcommand{\s}{\sigma}
\newcommand{\la}{\lambda}
\renewcommand{\s}{\sigma}
\newcommand{\om}{\omega}
\newcommand{\z}{\zeta}
\newcommand{\Bes}{B_2^{1/2}}
\renewcommand{\(}{\left(}
\renewcommand{\)}{\right)}
\newcommand{\wh}{\widehat}
\numberwithin{equation}{section}
\newtheorem{theorem}{Theorem}[section]
\newtheorem{proposition}[theorem]{Proposition}
\theoremstyle{definition}
\newtheorem{definition}[theorem]{Definition}
\newtheorem{remark}[theorem]{Remark}
\newtheorem{example}[theorem]{Example}
\begin{document}

\title[]{The scattering problem in Ryckman's class of Jacobi matrices}
\author[]{L. Golinskii, A. Kheifets$^*$, P. Yuditskii$^{**}$}

\thanks{$^*$The work was supported by
the University of Massachusetts Lowell Research and Scholarship Grant,
project number: H50090000000010}

\thanks{$^{**}$The work was supported by the Austrian Science
Found FWF, project number: P22025--N18}

\address{Institute for Low Temperature Physics and Engineering,
47 Lenin Avenue, Kharkov 61103, Ukraine}
\email{golinsky@ilt.kharkov.ua}

\address{
Department of Mathematics, University of Massachusetts Lowell,
01854, USA} \email{Alexander\_Kheifets@uml.edu}

\address{Institute for Analysis, Johannes Kepler University Linz,
A-4040 Linz, Austria} \email{franz.peherstorfer@jk.uni-linz.ac.at}

\address{Institute for Analysis, Johannes Kepler University Linz,
A-4040 Linz, Austria} \email{petro.yuditskiy@jku.at}

\date{\today}
\begin{abstract}
We give a complete solution of the scattering problem for Jacobi
matrices from a class which was recently introduced by E. Ryckman.
We characterize the scattering data for this class and illustrate
the inverse scattering on some simple examples.
\end{abstract}

\maketitle

\section{Introduction}

In mid 70th Guseinov \cite{Gus76,Gus77} developed a scattering
theory for infinite Jacobi matrices
\begin{equation}\label{jacmat}
J=J(\{a_n\},\{b_n\})=
\begin{bmatrix}
b_1&a_1&0&\ldots\\
a_1&b_2&a_2&\ldots \\
0&a_2&b_3&\ldots \\
\vdots&\vdots&\vdots&\ddots
\end{bmatrix},
\end{equation}
$a_n>0$, $b_n=\bar b_n$, which can be viewed as a discrete version
of the scattering theory for one-dimensional Schr\"odinder operator
on the half--line by Marchenko--Faddeev. The basic assumption on $J$
is the finiteness of the first moment
\begin{equation}\label{1stmom}
    \sum\limits_{n=1}^{\infty}n(|a_n-1|+|b_n|)<\infty .
\end{equation}
We say that a Jacobi matrix $J=J(\{a_n\},\{b_n\})$ belongs to
Guseinov's class $\cG$ if its parameters satisfy (\ref{1stmom}).
Later Geronimo \cite{Ger80,Ger94} (see also \cite{Gernev}) solved
the spectral problem for Jacobi matrices in more general
``weighted'' Guseinov's classes by using the inverse scattering
technique. The main feature of his results is that the decay of the
Jacobi parameters $\{a_n-1\}$, $\{b_n\}$ manifests itself in the
decay of the Fourier coefficients of the absolutely continuous part
(after suitable modifications) of the measure.

In the modern scattering theory of Jacobi operators (see, e.g.,
\cite{Egmite1}) the various analogues of \eqref{1stmom} (for much
more complex backgrounds) appeared, and they seemed to be
indispensable.

In 2007 Ryckman \cite{Ryck2,Ryck3,RyckPhD} came up with a new class
of Jacobi matrices, for which he obtained a complete spectral
description. To state his result we introduce some notations and
definitions. Let us write
$$
\be=\{\be_n\}\in\ell^2_s, \quad s>0\qquad{\rm if}\quad
\|\be\|^2_{\ell^2_s}:= \sum\limits_n |n|^s|\be_n|^2<\infty.
$$
\begin{definition}\label{001}
A Jacobi matrix $J=J(\{a_n\},\{b_n\})$ belongs to Ryckman's class
$\cR$, or its spectral measure $\s(J)\in \cR$, if the series $\sum_n
(a_n-1)$ and $\sum_n b_n$ are conditionally summable, and
$$
\xi_n:=-\sum\limits_{k=n+1}^\infty b_k\in\ell^2_1,\quad
\eta_n:=-\sum\limits_{k=n+1}^\infty (a_k-1)\in\ell^2_1
$$
\end{definition}
\begin{definition}\label{002}
A function $g$ on the unit circle $\bbT$ is said to be in the Besov
class $\Bes$ if the sequence of its Fourier coefficients is in
$\ell^2_1(\bbZ)$
\begin{equation}\label{besov}
g(t)=\sum_{n\in\bbZ} g_nt^n, \qquad \sum_{n\in\bbZ}
|n||g_n|^2<\infty.
\end{equation}
 Also, a function $f$ on the interval $[-2,2]$ is
said to be in $\Bes$ if $\wh f(t):=f\left(t+\frac{1}{t}\right)$ is
in $\Bes$.
\end{definition}
Note that $\wh f$ is a symmetric function, $\wh f(\bar t)=\wh f(t)$,
and conversely, each symmetric function has the form $\wh f$. If
$$ \wh f(t)=\sum_{n\in\bbZ} \wh f_n t^n, $$
then the symmetry of $\wh f$ implies $\wh f_{-n}=\wh f_n$. If $\wh
f$ is in addition a real function, then $\wh f_{-n}=\wh
f_n=\overline{\wh f_n}$.

\begin{theorem}[Ryckman]\label{ryckfund}
$J\in \cR$ if and only if the spectral measure $\s(J)$ of $J$ has
this structure:
\begin{itemize}
\item The absolutely continuous part is supported by $[-2,2]$, and
\begin{equation}\label{acsp}
\s_{ac}(dx)=f(x,J)dx=\frac{\rho(x,J)}{2\pi}\sqrt{4-x^2}\,dx, \quad
\rho(x,J)=\frac{\rho_0(x,J)}{(2-x)^{\ga_1}(2+x)^{\ga_2}}
\end{equation}
with $\gamma_1, \gamma_2$ equal $0$ or $1$, and $\log\rho_0\in
\Bes$.
\item The singular part is
\begin{equation}\label{sinsp}
\s_{s}(dx)=\sum_{k=1}^{N}\s_k(J)\de(\la_k), \quad N=N(J)<\infty,
\quad \s_k(J)>0, \quad \la_k\in\mathbb R \setminus [-2,2].
\end{equation}
\end{itemize}
\end{theorem}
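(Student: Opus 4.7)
The natural route is to transfer the problem from Jacobi matrices on $[-2,2]$ to orthogonal polynomials on the unit circle via the Szeg\H{o} map $x=t+t^{-1}$, and reduce the theorem to a higher-order Szeg\H{o} theorem for OPUC. First I would associate to $J$ the probability measure $\mu_J$ on $\bbT$ related to $\s(J)$ through the symmetric lift $x=2\cos\theta$; this produces a symmetric measure on $\bbT$ whose Verblunsky coefficients $\{\a_n\}$ are real. The classical Geronimus formulae express $a_n$ and $b_n$ as algebraic functions of three consecutive $\a_k$'s, with $a_n-1$ and $b_n$ being linear combinations of pairs of $\a_k$'s to leading order. Consequently, after summation by parts the tails $\xi_n$ and $\eta_n$ become, modulo quadratic error, a linear transformation of the sequence $\{\a_k\}_{k\ge 2n}$, and the Ryckman condition $\xi_n,\eta_n\in\ell^2_1$ turns out to be equivalent to $\{\a_n\}\in\ell^2_1$.

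Next I would invoke the OPUC counterpart (Ryckman's original theorem): a probability measure $\mu$ on $\bbT$ has Verblunsky coefficients in $\ell^2_1$ if and only if its a.c.\ weight $w$ satisfies $\log w\in\Bes$ and $\mu$ has only finitely many pure points. This higher-order Szeg\H{o} theorem is the technical core of the argument, and is proved by a step-by-step sum rule in the spirit of Killip--Simon/Simon--Zlato\v{s}: one expresses $\sum_n n|\a_n|^2$ as a telescoping sum whose terms control the $\Bes$ seminorm of $\log w$, plus a nonnegative contribution from the point masses. Pulling this back through the Szeg\H{o} map yields \eqref{acsp}: the Jacobian of $x=t+t^{-1}$ produces $\sqrt{4-x^2}$, while the symmetric lift of $w$ may vanish or blow up to first order at $t=\pm 1$, which is exactly what the admissible endpoint exponents $\ga_1,\ga_2\in\{0,1\}$ encode. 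Point masses of $\s(J)$ outside $[-2,2]$ correspond to the finitely many admissible Blaschke factors on the OPUC side; the sum rule itself forces their number to be finite.

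The main obstacle will be the endpoint analysis. The Szeg\H{o} map is not smooth at $t=\pm 1$, so the passage between $\Bes$ on $\bbT$ and $\Bes$ on $[-2,2]$ requires a careful Fourier-side argument: one must show that the singular factor $(2-x)^{\ga_1}(2+x)^{\ga_2}$ can be absorbed without destroying $\log\rho_0\in\Bes$, and conversely that each of the four combinations $(\ga_1,\ga_2)\in\{0,1\}^2$ is realized. A secondary subtlety is that only \emph{conditional} summability of $\sum(a_n-1)$ and $\sum b_n$ is assumed; the entire bookkeeping must therefore be carried out in terms of $\xi_n,\eta_n$ from the outset, and the summation-by-parts that converts these into the $\a$-sequence must be justified inside the $\ell^2_1$ framework without gaining a factor of $n$. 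Once these two steps are controlled, both implications of the theorem follow at once from the OPUC dictionary.
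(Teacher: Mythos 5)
First, a point of order: the paper does not prove Theorem \ref{ryckfund} at all --- it is quoted as Ryckman's result with references to \cite{Ryck2,Ryck3,RyckPhD}, so the only meaningful comparison is with Ryckman's own argument. Your outline does capture the broad strategy of that argument in the case $N=0$: the Szeg\H{o} map $x=t+t^{-1}$, the Geronimus relations \eqref{geron}, the observation that to leading order $b_{n+1}\approx\a_{2n}-\a_{2n+2}$ and $a_{n+1}^2-1\approx\a_{2n+1}-\a_{2n+3}+O(\a^2)$ so that the tails $\xi_n,\eta_n$ telescope into (roughly) $\a_{2n}$ and $\tfrac12\a_{2n+1}$, and finally a strong Szeg\H{o}/sum-rule theorem on the circle. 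That much is sound in spirit, modulo the endpoint bookkeeping you correctly flag.

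There are, however, two genuine gaps. First, the OPUC counterpart you invoke is misstated: the Golinskii--Ibragimov theorem says that $\{\a_n\}\in\ell^2_1$ forces the circle measure to have \emph{no} singular part at all, not ``finitely many pure points''; the correct equivalence is $\{\a_n\}\in\ell^2_1$ iff $\mu_s=0$ and $\log w\in\Bes$. Second, and more seriously, the eigenvalues $\la_k\in\bbR\setminus[-2,2]$ in \eqref{sinsp} lie outside the range of the map $x=2\cos\theta$, so when $N\ge 1$ the measure $\s(J)$ is not the Szeg\H{o} transform of any probability measure on $\bbT$, and the sentence ``point masses of $\s(J)$ outside $[-2,2]$ correspond to Blaschke factors on the OPUC side'' does not parse: there is no OPUC measure to carry them. (The paper itself makes this point in Section 4: ``Now the Szeg\H{o} transforms do not work.'') Handling the discrete spectrum therefore needs a separate layer of argument --- finiteness of the set of eigenvalues from $a_n\to1$, $b_n\to0$ together with a quantitative (sum-rule or Jost-function) bound, and a stability statement showing that adding or stripping finitely many mass points outside $[-2,2]$, with the weight modified by the corresponding Blaschke factors, preserves membership in $\cR$ (e.g.\ via Nevai-type formulas as in \cite[Lemma 7.15]{Nev79}, or via the Jost solution as in \cite{Damsim1,Damsim2}). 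As written, your reduction proves at most the $N=0$ case of the theorem.
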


Note that $\cG\subset\cR$, and the inclusion is proper. Indeed,
\begin{equation*}
\begin{split}
\sum_{n=1}^\infty n|\xi_n|^2 =& \sum_{n=1}^\infty
n\(\sum_{k=n+1}^\infty b_k\)^2 =\sum_{n=1}^\infty \sum_{k=n}^\infty
\sum_{l=n}^\infty n|b_k||b_l| \\
\le & \sum_{n=1}^\infty \sum_{k=n}^\infty \sum_{l=n}^\infty l|b_k|
|b_l| \le\sum_{n=1}^\infty \sum_{k=n}^\infty
\sum_{l=1}^\infty l|b_k||b_l| \\
=& \sum_{k=1}^\infty \sum_{n=1}^k \sum_{l=1}^\infty l|b_k||b_l| =
\sum_{k=1}^\infty k \sum_{l=1}^\infty l|b_k| |b_l|=
\(\sum_{k=1}^\infty k |b_k|\)^2.
\end{split}
\end{equation*}
Similarly,
$$ \sum_{n=1}^\infty n|\eta_n|^2\le \(\sum_{k=1}^\infty k |a_k-1|\)^2.
$$
On the other hand, $J(\{a_n\},\{b_n\})$ with
$$ a_n=1+(-1)^n n^{-1-\epsilon},\quad b_n=(-1)^n n^{-1-\epsilon}, \quad 0<\epsilon<1 $$
belongs to $\cR$, but
\eqref{1stmom} is false.

Given $\s(J)\in\cR$, define two outer functions which are key
ingredients in both spectral and scattering theory of the class
$\cR$. First, we put
\begin{equation}\label{outer1}
\begin{split}
D_0(z)=&
D_0(z,J):=\exp\left\{\frac12\,\int_{\bbT}\frac{t+z}{t-z}\,\log\wh\rho_0(t,J)m(dt)\right\}
\\ =&
\exp\left\{\frac{u_0(z)+iv_0(z)}2\right\}
\end{split}
\end{equation}
with
$$
u_0(z) = \int_{\bbT} P_z(t)\log\wh\rho_0(t,J)m(dt), \qquad v_0(z) =
\int_{\bbT} Q_z(t)\log\wh\rho_0(t,J)m(dt)=\widetilde u_0(z)
$$
a harmonic conjugate to $u_0$,
$$ u_0(t)=\log\wh\rho_0(t,J)=\sum_{k\in\bbZ} r_k t^k, \qquad
v_0(t)=\frac1{i}\sum_{k\in\bbZ} (\text{sgn}\ k) r_k t^k, $$ both
$u_0$ and $v_0$ are real, $u_0$ is symmetric, and $v_0$
antisymmetric: $v_0(\bar t)=-v_0(t)$. So
\begin{equation}\label{symm}
D_0(\bar t)=\overline{D_0(t)}, \quad |D_0(t)|^2=\wh\rho_0(t,J)
\end{equation}
almost everywhere on $\bbT$. It is known (cf., e.g.,
\cite[Proposition 6.1.5]{Simon1}) that $\log\wh\rho_0\in\Bes$
implies $(\wh\rho_0)^{\pm1}\in L^p(\bbT)$ for $p<\infty$, so
$D_0^{\pm1}\in H^p$ for such $p$.

Secondly, we put
\begin{equation}\label{outer2}
D(z)=D(z,J):=\exp\left\{\frac12\,\int_{\bbT}\frac{t+z}{t-z}\,\log\wh\rho(t,J)m(dt)\right\}
=\frac{D_0(z,J)}{(1-z)^{\ga_1}(1+z)^{\ga_2}}.
\end{equation}

Both $D_0$ and $D$ are related to the absolutely continuous part of
the spectral measure. The discrete part is completely determined by
the set of eigenvalues $\{\la_k\}$, or equivalently, by the set
\begin{equation}\label{eigen}
Z(J):=\left\{z_k(J): \la_k=z_k(J)+\frac1{z_k(J)}, \ \
k=1,2,\ldots,N\right\},
\end{equation}
$z_k(J)\in (-1,1)\backslash \{0\}$, and by the set of masses
$\{\s_k(J)\}_{k=1}^N$ in \eqref{sinsp}.
\begin{definition} Given $J\in\cR$, under the {\it scattering data}
for $J$ we mean the following collection $\{\ga_1(J),\ga_2(J);\
Z(J);\ \mu_1(J),\ldots,\mu_N(J);\ s(t,J)\}$
\begin{enumerate}
    \item A pair $(\ga_1(J),\ga_2(J))$ from \eqref{acsp};
    \item The set $Z(J)$ from \eqref{eigen}, or equivalently, a
    finite Blaschke product
\begin{equation}\label{bla}
B(z,J)=\prod_{k=1}^N
\frac{z_k(J)}{|z_k(J)|}\,\frac{z-z_k(J)}{1-z_k(J) z}\,,\quad B(\bar
t,J)=\overline{B(t,J)}=\frac1{B(t,J)}\,,\ \ t\in\bbT;
\end{equation}
    \item $N=N(J)$ positive numbers
\begin{equation}\label{normcon}
\mu_k(J):=\s_k(J)\left|\frac{B'(z_k,J)}{D(z_k,J)}\right|^2\,\left|1-z_k(J)^{-2}\right|^{-2}>0,
\quad k=1,2,\ldots,N;
\end{equation}
    \item The scattering function
\begin{equation}\label{scafun}
s(t,J):=\frac{\phi_0(t,J)}{\phi_0(\bar t,J)}\,,
\end{equation}
$\phi_0$ is the Jost function for $J$ (see Section 2).
\end{enumerate}
\end{definition}

Compared to \cite{Ger80,Ger94} we move backward, from spectral to
scattering. The goal of the present note is to obtain a complete
characterization of the scattering data in Ryckman's class, and so
demonstrate that the scattering theory goes far beyond Guseinov's
class \eqref{1stmom}. We analyze the scattering data, prove the
uniqueness theorem in Section 2, and solve the inverse scattering
problem in Section~3. A few simple examples are given in Section 4.

\section{Scattering data}
The basic three-term recurrence relation for a Jacobi matrix
$J(\{a_n\},\{b_n\})$
$$ a_{n-1}y_{n-1}+b_ny_n+a_ny_{n+1}=(z+z^{-1})y_n, \quad
n=1,2,\ldots, \quad a_0=1 $$ has two ``distinguished'' solutions.
The first one, known as the sine-type solution, is
$$ y_n=s_n(z)=p_{n-1}\(z+\frac1z\), \qquad s_0=0, \ \ s_1=1, $$
$p_k$ are orthonormal polynomials with respect to the spectral
measure $\s(J)$. A fundamental result by Szeg\H{o} concerns an
asymptotic behavior of orthonormal polynomials with respect to
``nice'' measures with $\text{supp}\s\subset[-2,2]$. It was extended
substantially in \cite{Gon75,Nik84} and \cite{Pehyud}, where a
finite (respectively, infinite) number of mass points outside
$[-2,2]$ is allowed. In our notation the Szeg\H{o} asymptotics
states that for $J\in\cR$
\begin{equation}\label{szego}
Q(z):=\lim_{n\to\infty}z^np_n\(z+\frac1z\)=\frac{B(z,J)}{(1-z^2)D(z,J)}
\end{equation}
uniformly on the compact subsets of the unit disk $\bbD$.

The second solution is the Jost solution $y_n=\phi_n$, defined by a
specific asymptotic behavior at infinity
$$ \lim_{n\to\infty} z^{-n}\phi_n(z,J)=1 $$
uniformly on the compact subsets of the unit disk $\bbD$. The Jost
solution exists under certain additional assumptions (cf.
\cite[formulae (13.9.2)--(13.9.4)]{Simon2}), which are met for
$J\in\cR$. $\phi_0$ is called the Jost function. For an exhaustive
treatment of the Szeg\H{o} and the Jost asymptotics see
\cite{Damsim1}, \cite[Section 13.9]{Simon2}.

The relation between the Szeg\H{o} asymptotics and the Jost function
is given by (see, e.g., \cite{Damsim2}, \cite[Theorem
13.9.2]{Simon2}))
\begin{equation}\label{segjos}
\phi_0(z)=(1-z^2)Q(z)=\frac{B(z,J)}{D(z,J)}\,.
\end{equation}
Hence, for $J\in\cR$ $\phi_0\in H^p$ with $p<\infty$, so the
boundary values of $\phi_0$ exist a.e., and the scattering function
$s$ \eqref{scafun} is well defined. Moreover, by \eqref{outer2}
\begin{equation}\label{scafunr}
\begin{split} s(t,J)=& \frac{(1-t)^{\ga_1(J)}(1+t)^{\ga_2(J)}}{(1-\bar
t)^{\ga_1(J)}(1+\bar t)^{\ga_2(J)}}\,\frac{D_0(\bar
t,J)}{D_0(t,J)}\,B^2(t,J)\\
=& (-1)^{\ga_1(J)}\,t^{\ga_1(J)+\ga_2(J)}\,\frac{D_0(\bar
t,J)}{D_0(t,J)}\,B^2(t,J).
\end{split}
\end{equation}
Clearly, $s(\bar t)=\overline{s(t)}=s^{-1}(t)$.

\begin{theorem}\label{th1}
The scattering function of a Jacobi matrix $J\in\cR$ belongs to
$\Bes$ and admits representation
\begin{equation}\label{scafrep}
s(t,J)=(-1)^{\ga_1(J)}\,t^M e^{-iv(t)},
\end{equation}
where $M=2N+\ga_1(J)+\ga_2(J)\in\bbZ_+$, $v$ satisfies
\begin{equation}\label{besant}
v(t)=\overline{v(t)}=-v(\bar t), \qquad v\in\Bes.
\end{equation}
\end{theorem}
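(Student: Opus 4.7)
The plan is to start from formula \eqref{scafunr} for $s(t,J)$ and write both unimodular pieces $D_0(\bar t,J)/D_0(t,J)$ and $B^2(t,J)$ explicitly as $t^{\text{integer}}$ times the exponential of $-i$ times a real phase, then verify that the combined phase meets the requirements \eqref{besant}.

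For the $D_0$-factor, formula \eqref{outer1} together with the symmetry/antisymmetry of $u_0,v_0$ recorded just before \eqref{symm} give, on $\bbT$, $D_0(t)=\exp\{(u_0(t)+iv_0(t))/2\}$ with $u_0=\log\wh\rho_0$ real and symmetric and $v_0=\widetilde u_0$ real and antisymmetric, whence
\[
\frac{D_0(\bar t,J)}{D_0(t,J)}=e^{-iv_0(t)}.
\]
Since $u_0\in\Bes$ by Theorem \ref{ryckfund} and the conjugate operator merely multiplies each Fourier coefficient by $-i\,\text{sgn}(n)$ and hence preserves $\Bes$, we obtain $v_0\in\Bes$.

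For the $B^2$-factor, each Blaschke factor with real $z_k\in(-1,1)\setminus\{0\}$ satisfies, on $\bbT$,
\[
\frac{t-z_k}{1-z_k t}\;=\;t\cdot\frac{\overline{1-z_k t}}{1-z_k t}\;=\;t\,e^{-2i\arg(1-z_k t)}.
\]
Multiplying over $k$ and squaring (the overall sign $\prod_k\text{sgn}(z_k)\in\{\pm 1\}$ is killed) yields
\[
B^2(t,J)=t^{2N}e^{-iw(t)},\qquad w(t):=4\sum_{k=1}^N\arg(1-z_k t).
\]
The function $w$ is real; since $z_k\in\bbR$ we have $\arg(1-z_k\bar t)=-\arg(1-z_k t)$, so $w$ is antisymmetric. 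Each summand is the boundary value of $\Im\log(1-z_k z)$, analytic in a neighborhood of $\overline{\bbD}$ (as $|z_k|<1$), so its Fourier coefficients decay geometrically and $w\in\Bes$.

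Setting $v:=v_0+w$ and plugging the factorizations above into \eqref{scafunr} gives exactly \eqref{scafrep} with $M=2N+\ga_1+\ga_2\in\bbZ_+$, and $v$ satisfies \eqref{besant}. For the membership $s(\cdot,J)\in\Bes$ one invokes the identification $\Bes=H^{1/2}(\bbT)$ together with the pointwise inequality $|e^{iv(t)}-e^{iv(\t)}|\le|v(t)-v(\t)|$, which through the Slobodeckij integral characterization of $H^{1/2}$ gives $\|e^{-iv}\|_{H^{1/2}}\lesssim\|v\|_{H^{1/2}}$; multiplication by $(-1)^{\ga_1}t^M$ preserves $\Bes$. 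The main technical point I expect is the sign-and-branch bookkeeping in the $B^2$ computation -- confirming that $\prod_k\text{sgn}(z_k)$ cancels upon squaring and that the phase $w$ is an unambiguously single-valued real function -- but this follows at once from the fact that $1-z_k z$ is zero-free on $\overline{\bbD}$.
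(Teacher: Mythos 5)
Your proof is correct and follows essentially the same route as the paper: start from \eqref{scafunr}, write $D_0(\bar t)/D_0(t)=e^{-iv_0(t)}$ with $v_0=\widetilde u_0\in\Bes$ antisymmetric, factor each Blaschke term as $t$ times a unimodular phase whose logarithm is analytic across $\bbT$, and set $v=v_0+v_1$. The only difference is that you supply an explicit justification (via the $H^{1/2}$ Gagliardo seminorm and the Lipschitz bound $|e^{iv(t)}-e^{iv(\t)}|\le|v(t)-v(\t)|$) for the final membership $s\in\Bes$, which the paper simply declares ``clear'' from \eqref{scafrep}; this is a welcome addition, not a deviation.
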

\begin{proof} We apply \eqref{scafunr}. By \eqref{outer1}
$$
\frac{D_0(\bar t,J)}{D_0(t,J)}=\frac{\overline{D_0(t,J)}}{D_0(t,J)}=
\frac{|D_0(t,J)|^2}{D_0^2(t,J)}=e^{-iv_0(t)}. $$
 Since $u_0=\log\wh\rho_0\in\Bes$, and the Hilbert transform is
 bounded (isometric) in $\Bes$, then $v_0\in\Bes$ and antisymmetric.

Next,
$$ B^2(t,J)=t^{2N}\(\prod_{k=1}^N\frac{1-z_k\bar t}{1-z_k
t}\)^2=t^{2N}e^{-iv_1(t)}, $$ and as $z_k\in\bbD$ then $v_1\in\Bes$
and antisymmetric. It remains to put $v=v_0+v_1$. It is clear from
\eqref{scafrep} that $s\in\Bes$, as claimed.
\end{proof}
\begin{remark} Representation \eqref{scafrep} for the scattering
function of $J\in\cR$ is a direct consequence of \eqref{scafunr} and
$\log\wh\rho_0\in\Bes$. A slight refinement of Peller's theorem
\cite[Corollary 7.8.2]{Pel} states that an arbitrary function
$$ h\in\Bes, \qquad h(\bar t)=\overline{h(t)}=h^{-1}(t) $$
a.e. admits representation
$$ h(t)=(-1)^\ga\,t^j e^{-iw(t)}, $$
where $\ga=0$ or $1$, $j\in\bbZ$ an integer number, $w$ satisfies
\eqref{besant}, and such representation is unique. A pair $(\ga, j)$
can be viewed as an index of $h$.
\end{remark}

Let us turn to the numbers $\mu_k(J)$ \eqref{normcon}. In his
version of the scattering theory for Jacobi matrices \eqref{1stmom}
Guseinov suggested the normalizing constants
\begin{equation}\label{guscons}
m_k(J):=\sum_{n=1}^\infty |\phi_n(z_K(J),J)|^2, \quad k=1,2,\ldots,N
\end{equation}
as a part of the scattering data. We show that these values agree.
\begin{proposition}
Let $J\in\cR$. Then $\mu_k(J)=m_k(J)$, $k=1,2,\ldots,N$.
\end{proposition}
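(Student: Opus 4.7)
The plan is to relate the Jost solution to the orthonormal polynomials at an eigenvalue via a single proportionality constant, and then pin that constant down by a residue calculation for the Weyl $m$-function. Concretely, since $z_k(J)\in(-1,1)\setminus\{0\}$, the defining Jost asymptotics forces $\phi_n(z_k,J)\sim z_k^n$, so $\{\phi_n(z_k,J)\}_{n\ge 1}\in\ell^2$; at the same time, $\la_k$ is a genuine eigenvalue of $J$ with $\ell^2$ eigenvector $(p_{n-1}(\la_k))_{n\ge 1}$. Both sequences satisfy the same three-term recurrence at spectral parameter $\la_k$, and the one-dimensional space of $\ell^2$ solutions at an eigenvalue gives
\[
\phi_n(z_k,J)=c_k\,p_{n-1}(\la_k),\qquad n\ge 1,
\]
for a single constant $c_k$. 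Parseval applied to the $L^2(d\s)$-normalized indicator of the atom $\{\la_k\}$ yields $\s_k(J)\sum_{n\ge 0}|p_n(\la_k)|^2=1$, whence
\[
m_k(J)=\sum_{n=1}^\infty|\phi_n(z_k,J)|^2=\frac{|c_k|^2}{\s_k(J)}.
\]
Thus it remains to compute $|c_k|^2$ and match it against \eqref{normcon}.

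I would identify $c_k=\phi_1(z_k,J)$ (using $p_0\equiv 1$) through the Weyl $m$-function. The standard Green's function representation, obtained from the Wronskian identity $a_n(s_n\phi_{n+1}-s_{n+1}\phi_n)=-\phi_0(z)$ (constant in $n$), specializes at the $(1,1)$-entry to
\[
m(\la)=\int\frac{d\s(x)}{x-\la}=-\frac{\phi_1(z)}{\phi_0(z)},\qquad \la=z+z^{-1}.
\]
By \eqref{segjos}, $\phi_0(z)=B(z,J)/D(z,J)$. Since $D$ is outer and hence nonvanishing on $\bbD$ while $z_k$ is a simple zero of $B$, the Jost function $\phi_0$ has a simple zero at $z_k$ with $\phi_0'(z_k)=B'(z_k,J)/D(z_k,J)$. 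Using $dz/d\la\big|_{\la_k}=(1-z_k^{-2})^{-1}$, a first-order Taylor expansion gives
\[
\phi_0(z(\la))=\frac{\phi_0'(z_k)}{1-z_k^{-2}}(\la-\la_k)+O((\la-\la_k)^2),
\]
so matching the residue $-\s_k(J)$ on the spectral side of $m(\la)$ at $\la_k$ produces
\[
c_k=\phi_1(z_k,J)=\frac{\s_k(J)\,\phi_0'(z_k)}{1-z_k^{-2}}=\frac{\s_k(J)\,B'(z_k,J)/D(z_k,J)}{1-z_k^{-2}}.
\]
Substituting into $m_k(J)=|c_k|^2/\s_k(J)$ reproduces exactly the right-hand side of \eqref{normcon}, giving $m_k(J)=\mu_k(J)$.

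No step should present a real obstacle. The Jost solution exists and satisfies the required asymptotics for $J\in\cR$ by the references cited just before this statement, $D(z_k)\neq 0$ follows from $D^{-1}\in H^p$ for every $p<\infty$, and the change of variable $\la=z+z^{-1}$ is locally biholomorphic near each $z_k$ because $z_k\neq\pm 1$. The only mildly delicate bookkeeping is the chain rule in the residue computation; it is precisely this step that inserts the factor $(1-z_k^{-2})^{-1}$ into $c_k$, and consequently the factor $|1-z_k^{-2}|^{-2}$ into $\mu_k(J)$.
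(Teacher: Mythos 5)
Your argument is correct and is essentially the paper's own proof: both identify $\phi_n(z_k)=c_k p_{n-1}(\la_k)$ with $c_k=\phi_1(z_k)$, use $\s_k^{-1}=\sum_n|p_n(\la_k)|^2$ to get $m_k=|c_k|^2/\s_k$, and then extract $\phi_1(z_k)$ from the residue of the Weyl function together with $\phi_0=B/D$. The only cosmetic difference is that you justify the proportionality via the one-dimensionality of the $\ell^2$ solution space, while the paper notes $\phi_0(z_k)=0$ makes the Jost solution an eigenvector; these are equivalent.
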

\begin{proof} It is known from the general theory of Jacobi matrices
and orthogonal polynomials, that the vectors
$$ \Pi_k=\{s_n(z_k(J))\}_{n\ge1}=\{p_n(\la_k)\}_{n\ge0}\in\ell^2, $$
so $\Pi_k$ are eigenvectors of $J$ with the corresponding
eigenvalues $\la_k$. Furthermore,
\begin{equation}\label{mass}
\frac1{\s_k(J)}=\sum_{n=1}^\infty |s_n(z_k(J))|^2=\sum_{n=0}^\infty
|p_n(\la_k)|^2. \end{equation} On the other hand,
$\phi_0(z_k(J))=0$, and so $\Phi_k=\{\phi_n(z_k(J),J)\}_{n\ge1}$ are
also eigenvectors of $J$ for the same eigenvalues. Hence
$\Phi_k=c_k\Pi_k$, and we find the constants $c_k$ from the initial
data $s_1=1$, so that $c_k=\phi_1(z_k(J),J)$. By \eqref{guscons} and
\eqref{mass} $m_k(J)=|\phi_1(z_k(J),J)|^2\s_k^{-1}(J)$.

It remains to express $\phi_1$ in terms of the spectral data. Once
the Jost asymptotics exists for $J\in\cR$, the Jost solution
$\phi_n$ is proportional to the Weyl solution
$$ w_n(z):=\((z+z^{-1}-J)^{-1}e_1,e_n\), \quad n=1,2,\ldots, \quad
w_0=1, $$ that is, $\phi_n=\phi_0 w_n$. In particular,
$$ \phi_1(z,J)=\phi_0(z,J)w_1(z)=\phi_0(z,J)M(z,J), $$
where $M$ is the Weyl function for $J$
$$ M(z,J)=\((z+z^{-1}-J)^{-1}e_1,e_1\)=\int_{\bbR}
\frac{\s(d\la)}{z+z^{-1}-\la}=\frac{\s_k(J)}{z+z^{-1}-\la_k}+\widetilde
M(z), $$ $\widetilde M$ is analytic at $z_k(J)$. So
$$ \phi_1(z,J)=M(z,J)\frac{B(z,J)}{D(z,J)}. $$
Since $\lim_{z\to z_k}(z-z_k(J))M(z,J)=\s_k(J)(1-z_k^{-2}(J))^{-1}$,
we finally have
$$
\phi_1(z_k(J),J)=\frac{\s_k(J)}{1-z_k^{-2}(J)}\,\frac{B'(z_k(J),J)}{D(z_k(J),J)},
$$
as needed.
\end{proof}

To complete the analysis of scattering data we prove the uniqueness
theorem.
\begin{theorem}\label{uniq}
Let $J_l\in\cR$, $l=1,2$, have the same scattering data. Then
$J_1=J_2$. \end{theorem}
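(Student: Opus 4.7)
The plan is to show that the scattering data uniquely determines the spectral measure $\s(J)$; since a Jacobi matrix is recovered from its spectral measure via the Gram--Schmidt construction of the orthonormal polynomials, this will give $J_1=J_2$. The pair $(\ga_1,\ga_2)$ and the Blaschke product $B$ (equivalently the set $\{\la_k\}$ of eigenvalues) appear directly in the scattering data, so the task reduces to recovering the absolutely continuous weight $\wh\rho_0$ on $\bbT$ and the discrete masses $\{\s_k\}$.

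First I would apply the uniqueness part of the refined Peller theorem recalled in the remark following Theorem~\ref{th1}: the factorization $s(t,J)=(-1)^{\ga_1}t^M e^{-iv(t)}$ of Theorem~\ref{th1} is unique, so $s(\cdot,J_1)=s(\cdot,J_2)$ together with the common index $(\ga_1,M)$ forces $v(\cdot,J_1)=v(\cdot,J_2)$. Splitting $v=v_0+v_1$ as in the proof of Theorem~\ref{th1}, and observing that $v_1$ is determined by the common $B$, I conclude $v_0(\cdot,J_1)=v_0(\cdot,J_2)$. Since $v_0$ is the harmonic conjugate of $u_0=\log\wh\rho_0$ and the Hilbert transform on $\Bes$ has kernel equal to the real constants, there is a single real number $c$ with $u_0(\cdot,J_2)=u_0(\cdot,J_1)+c$; equivalently $D_0(z,J_2)=e^{c/2}D_0(z,J_1)$ on $\bbD$, and by \eqref{outer2} the same scaling holds for $D$.

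Substituting into \eqref{normcon}, the common values of $\mu_k$, $z_k$, $B$, $\ga_1$, $\ga_2$ force $\s_k(J_2)=e^c\s_k(J_1)$, and combining \eqref{acsp} with $\wh\rho_0(\cdot,J_2)=e^c\wh\rho_0(\cdot,J_1)$ gives $f(x,J_2)=e^c f(x,J_1)$ on $[-2,2]$; altogether $\s(J_2)=e^c\s(J_1)$. Each $\s(J_l)$ is a probability measure of total mass $1$, so $e^c=1$, $\s(J_1)=\s(J_2)$, and therefore $J_1=J_2$. The key obstacle is the middle step: isolating the ambiguity in the reconstruction of $D_0$ from the scattering function to a single positive multiplicative constant requires both the uniqueness in Peller's theorem to pin down the antisymmetric phase $v$ on $\bbT$ and the one-dimensional kernel of the Hilbert transform on $\Bes$ to pin down $u_0$ up to an additive constant. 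Once this single scalar remains, the probability normalization of the spectral measure immediately closes the argument.
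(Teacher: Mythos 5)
Your argument is correct and follows essentially the same route as the paper's proof: both reduce the claim to $\s(J_1)=\s(J_2)$, deduce from the equality of scattering functions that $D_0(\cdot,J_2)$ is a positive constant multiple of $D_0(\cdot,J_1)$, propagate that constant to the masses $\s_k$ via \eqref{normcon}, and kill it with the probability normalization of the spectral measure. Your more detailed justification of the middle step (Peller uniqueness plus the constant kernel of the Hilbert transform on $\Bes$) is just an unpacking of the paper's terse remark that $D_0^{\pm1}\in H^2$ forces $D_0(J_2)=cD_0(J_1)$.
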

\begin{proof} We want to make sure that $\s(J_1)=\s(J_2)$. It is
clear from \eqref{scafunr} that
$$
\frac{\overline{D_0(t,J_1)}}{D_0(t,J_1)}=\frac{\overline{D_0(t,J_2)}}{D_0(t,J_2)}\,.
$$
But in Ryckman's class $D_0^{\pm1}\in H^2$, so the latter means
$D_0(J_2)=cD_0(J_1)$, $D(J_2)=cD(J_1)$ for some $c>0$, and hence
$\s_{ac}(J_2)=c^2\s_{ac}(J_1)$. Next, $\mu_k(J_1)=\mu_k(J_2)$
implies by \eqref{normcon} $\s_k(J_2)=c^2\s_k(J_1)$, and the
normalizing condition
$$ \int_{-2}^2 f(x,J_1)\,dx + \sum_{k=1}^N \s_k(J_1)=
   \int_{-2}^2 f(x,J_2)\,dx + \sum_{k=1}^N \s_k(J_2)=1 $$
gives $c=1$, as needed.
\end{proof}

\section{Inverse scattering}

Consider the following collection of data $\{\ga_1,\ga_2;\ Z;\
\mu_1,\ldots,\mu_N;\ s\}$:
\begin{enumerate}
    \item a pair of numbers $(\ga_1,\ga_2)$ from
    $\{0,1\}\times\{0,1\}$;
    \item an arbitrary set of $N$ distinct points
    $Z=\{z_k\}_{k=1}^N$ in $(-1,1)\backslash\{0\}$;
    \item an arbitrary set of $N$ positive numbers $\mu_k$;
    \item a function $s\in\Bes$, $|s|=1$ a.e. on $\bbT$, with the index
    $(\ga_1, 2N+\ga_1+\ga_2)$, i.e.,
$$ s(t)=(-1)^{\ga_1}\,t^{2N+\ga_1+\ga_2}\,e^{-i\om(t)}, $$
$\om$ satisfies $\om(t)=\overline{\om(t)}=-\om(\bar t)$,
$\om\in\Bes$.
\end{enumerate}
\begin{theorem}\label{invsca}
There exists a unique Jacobi matrix $J\in\cR$, for which the above
collection is the scattering data.
\end{theorem}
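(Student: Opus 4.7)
The plan is to read Section 2 in reverse: uniqueness is already supplied by Theorem \ref{uniq}, so only existence is at stake. I would build a candidate probability measure $\s$ of the form prescribed by Ryckman's theorem, pass to its Jacobi matrix $J$, invoke the ``if'' direction of Theorem \ref{ryckfund} to know $J\in\cR$, and then verify that the scattering data of $J$ agree with the input.

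First, from $Z=\{z_k\}_{k=1}^N$ form the finite Blaschke product $B$ via \eqref{bla}. As in the proof of Theorem \ref{th1}, $B^2(t)=t^{2N}e^{-iv_1(t)}$ with $v_1\in\Bes$ real and antisymmetric ($v_1$ is smooth, since $B$ is finite). Using the index hypothesis, write $s(t)=(-1)^{\ga_1}t^{2N+\ga_1+\ga_2}e^{-i\om(t)}$ with $\om\in\Bes$ real and antisymmetric, and set
$$
v_0:=\om-v_1\in\Bes,
$$
still real and antisymmetric. The Hilbert transform is bounded on $\Bes$ and induces a bijection between its real symmetric mean-zero subspace and its real antisymmetric subspace, so there is a real symmetric $u_0\in\Bes$, unique up to an additive constant $c_0$, with $\widetilde{u_0}=v_0$. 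Set $\wh\rho_0:=e^{u_0+c_0}$, determine $\rho_0$ on $[-2,2]$ by $\wh\rho_0(t)=\rho_0(t+t^{-1})$, let the a.c. density $f$ be given by \eqref{acsp}, and manufacture $D_0,D$ from $\wh\rho_0$ via \eqref{outer1}, \eqref{outer2}.

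Next, set $\la_k=z_k+z_k^{-1}\in\bbR\setminus[-2,2]$ and, inverting \eqref{normcon},
$$
\s_k:=\mu_k\left|\frac{D(z_k)}{B'(z_k)}\right|^2\left|1-z_k^{-2}\right|^2>0.
$$
A shift $c_0\mapsto c_0+c$ multiplies $\wh\rho_0$, $f$, and every $|D(z_k)|^2$ (hence every $\s_k$) by the common factor $e^c$, so $c_0$ can be (and must be) chosen uniquely so that
$$
\int_{-2}^2 f(x)\,dx+\sum_{k=1}^N\s_k=1.
$$
The resulting probability measure $\s(dx)=f(x)\,dx+\sum_k\s_k\de(\la_k)$ has infinite support, so its orthonormal polynomials produce a unique Jacobi matrix $J$; and since $\s$ has precisely the structure in Theorem \ref{ryckfund}, the ``if'' half of that theorem gives $J\in\cR$.

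Finally, the scattering data of $J$ must be identified with the input. By construction $(\ga_1(J),\ga_2(J))=(\ga_1,\ga_2)$ and $Z(J)=Z$; the outer function $D_0(\cdot,J)$ coincides with the $D_0$ built above, since both are outer with modulus $\wh\rho_0$ and with the same positive value at the origin. Formula \eqref{scafunr} then yields
$$
s(t,J)=(-1)^{\ga_1}t^{2N+\ga_1+\ga_2}e^{-i(v_0+v_1)(t)}=s(t),
$$
while the definition of $\s_k$ together with \eqref{normcon} forces $\mu_k(J)=\mu_k$. The step requiring the most care is the normalization: one must see that the single free constant $c_0$ can simultaneously make $\s$ a probability measure and preserve the prescribed $\mu_k$. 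This works because $f$ and the $\s_k$ rescale by the same positive factor under $c_0\mapsto c_0+c$, so the ratio built into $\mu_k$ is insensitive to $c_0$. Beyond this bookkeeping, the only substantive ingredients are the Peller-type inversion recalled in the remark and the mapping properties of the Hilbert transform on $\Bes$, together with the forward implication of Ryckman's theorem.
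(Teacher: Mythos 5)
Your proposal is correct and follows essentially the same route as the paper: extract $v_0$ from $s$ and $B^2$, recover $u_0$ by inverting the Hilbert transform on $\Bes$ up to an additive constant, build $\wh\rho_0$, $D_0$, $D$ and the masses $\s_k$, fix the constant by the normalization $\int f+\sum\s_k=1$ (legitimate because $f$ and all $\s_k$ rescale by the same factor), invoke Ryckman's theorem for $J\in\cR$, and quote Theorem \ref{uniq} for uniqueness. Your exponent $\left|1-z_k^{-2}\right|^{2}$ is in fact the correct inversion of \eqref{normcon} (the paper's displayed formula at this step carries a sign typo in that exponent), and your closing verification of the scattering data is slightly more explicit than the paper's, but the argument is the same.
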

\begin{proof} As in the proof of Theorem \ref{th1} we can write
$$ s(t)= \frac{(1-t)^{\ga_1}(1+t)^{\ga_2}}{(1-\bar
t)^{\ga_1}(1+\bar t)^{\ga_2}}\,B^2(t,Z)e^{-iv_0(t)},
$$
$v_0$ is subject to \eqref{besant}. The Fourier series for $v_0$ is
$$ v_0(t)=\sum_{n\in\bbZ} \wh v_0(n)t^n, \quad \wh
v_0(-n)=\overline{\wh v_0(n)}=-\wh v_0(n), $$
 so $\wh v_0(0)=0$. Take $u_0$ such that $v_0$ is its harmonic
 conjugate. Then $u_0(\bar t)=\overline{u_0(t)}=u_0(t)$ and
$$ u_0(t)=\sum_{n\in\bbZ} \wh u_0(n)t^n, \quad \wh
u_0(-n)=\overline{\wh u_0(n)}=\wh v_0(n). $$
 Note that $u_0$ is defined up to an additive real constant $\wh
 u_0(0)$, which will be chosen later on from the normalization
 condition.

Define a function $\rho_0$ on $[-2,2]$ by $\wh\rho_0=e^{u_0}$, and
put
$$ \rho(x):=\frac{\rho_0(x)}{(2-x)^{\ga_1}(2+x)^{\ga_2}}\,, \quad
 f(x):=\frac1{2\pi}\rho(x)\sqrt{4-x^2}, $$
both up to a factor $C=e^{\wh u_0(0)}$. Next, write
\begin{equation*}
 \begin{split}
D_0(z)=&
\exp\left\{\frac12\,\int_{\bbT}\frac{t+z}{t-z}\,u_0(t)m(dt)\right\}
= \exp\left\{\frac{u_0(z)+iv_0(z)}2\right\}, \\
D(z)=&\frac{D_0(z)}{(1-z)^{\ga_1}(1+z)^{\ga_2}}\,,
\end{split}
\end{equation*}
and put
$$
\s_k:=\mu_k\left|\frac{D(z_k)}{B'(z_k)}\right|^2\,\left|1-z_k^{-2}\right|^{-2}>0,
\quad k=1,2,\ldots,N, $$ the latter values are defined up to a
factor $C$ above, which is now taken from
$$ \int_{-2}^2 f(x)\,dx + \sum_{k=1}^N \s_k=1. $$
Since $v_0\in\Bes$, then so is $u_0$, and by Ryckman's theorem the
measure $\s=\{f,\{\s_k\}\}$ is the spectral measure of some Jacobi
matrix $J\in\cR$. By construction, $\{\ga_1,\ga_2;\ Z;\
\mu_1,\ldots,\mu_N;\ s\}$ is the scattering data for $J$, and $J$ is
unique by Theorem \ref{uniq}. The proof is complete.
\end{proof}

\section{Examples}

As we see, it is comparatively easy to restore the spectral measure
from the scattering data. Assume first that $N=0$, that is,
$\text{supp}\,\s\subset[-2,2]$. To find the Jacobi parameters it
seems reasonable now to carry the measure over from $[-2,2]$ to the
unit circle (the inverse Szeg\H{o} transform)
$\s=\text{Sz}^{(\text{o})}(\mu)$ \footnote{Due to the form of
\eqref{acsp} it is convenient to use the modified Szeg\H{o}
transform}
\begin{equation}\label{sztran}
\s(dx)=f(x)\,dx, \qquad \mu(dt)=w(t)m(dt),
   \quad w(t)=\frac{c\wh f(t)}{|1-t^2|}\,,\end{equation}
$\mu$ is a probability measure on $\bbT$ (by $c$ we denote different
positive normalizing constants), to compute the Verblunsky
coefficients $\a_n=\a_n(\mu)$ of $\mu$, and then go back to Jacobi
parameters with the help of the Geronimus relations
\begin{equation}\label{geron}
\begin{split}
b_{n+1} =& \a_{2n}(1-\a_{2n+1})-\a_{2n+2}(1+\a_{2n+1}), \\
a^2_{n+1} =& (1-\a_{2n+3}(1-\a^2_{2n+2})(1+\a_{2n+1}), \quad
n=0,1,\ldots,
\end{split}
\end{equation}
see \cite[Section 13.2]{Simon2}.

\begin{example}
Let $\ga_1=\ga_2=0$, $a\in[0,1)$, and $s(t)=(1-at)(1-a\bar t)^{-1}$.
Since $B=1$ we have as in the proof of Theorem \ref{invsca}
$$ D(z)=\frac{c}{1-az}\,,\quad
\wh\rho(t)=|D(t)|^2=\frac{c}{|1-at|^2}\,, \quad
f(x)=\frac{c\sqrt{4-x^2}}{1-ax+a^2}\,, $$ so by \eqref{sztran}
$$ \mu(dt)=\frac{cm(dt)}{|1-at|^2}\,. $$
Hence $\mu$ is the Bernstein--Szeg\H{o} measure, for which the
Verblunsky coefficients are
$$ \a_0=a, \quad \a_1=\a_2=\ldots=0. $$
Finally, by \eqref{geron}
$$ b_1=a, \quad b_2=b_3=\ldots=0, \qquad a_1=a_2=\ldots=1. $$
\end{example}

\begin{example}
For $\ga_1, \ga_2, a$ as above put $s(t)=(1-a\bar t)(1-at)^{-1}$. We
now have
$$ D(z)=c(1-az)\,,\quad
\wh\rho(t)=c|1-at|^2\,, \quad f(x)=c(1-ax+a^2)\sqrt{4-x^2},
$$
and $\mu(dt)=c|1-at|^2\,m(dt)$. The Verblunsky coefficients are (see
\cite[Example 1.6.4]{Simon1})
$$ \a_n=-\frac{a^{-1}-a}{a^{-n-2}-a^{n+2}}\,, \qquad n=0,1,\ldots.
$$
Finally,
$$ b_{n+1}=-a^{2n+1}\frac{(1-a^2)^2}{(1-a^{2n+2})(1-a^{2n+4})}\,,
\quad a^2_{n+1}=1-a^{2n+2}\frac{(1-a^2)^2}{(1-a^{2n+4})^2}\,. $$
\end{example}

It is worth pointing at the difference between the above examples.
In the first one $D^{-1}$ is a polynomial (of degree 1), so there
are finitely many nonzero Verblunsky coefficients $\a_n$, and $J$ is
of finite support, i.e., $a_n=1$ and $b_n=0$ for all large enough
$n$. In the second case $D^{-1}$ is a rational function with the
pole $1/a$, so the Jacobi parameters tend to their limits
exponentially fast.

\begin{example}\label{ex3}
Let $\ga_1=\ga_2=N=0$, $a,b\in[0,1)$. Put
$$ s(t)=\frac{(1-at)(1-bt)}{(1-a\bar t)(1-b\bar t)}\,. $$
As above, the spectral measure is
$$ \s(dx)=\frac{c\sqrt{4-x^2}}{(1-ax+a^2)(1-bx+b^2)}\,dx,\qquad
\mu(dt)=\frac{cm(dt)}{|1-at|^2|1-bt|^2}\,. $$ The latter is again
the Bernstein--Szeg\H{o} measure with the Verblunsky coefficients
$$ \a_0=\frac{a+b}{1+ab}\,, \ \ \a_1=-ab, \ \ \a_2=\a_3=\ldots=0. $$
Finally,
$$ b_1=a+b, \ \ b_2=b_3=\ldots=0; \quad a^2_1=1-ab,\ \
a_2=a_3=\ldots=1. $$
\end{example}

\begin{remark} For $\ga_1=\ga_2=0$ the Szeg\H{o} transform
$\text{Sz}^{(\text{o})}$ is a right one in the sense of
\cite{Ryck2}. We can compute the same examples with
$\ga_1,\ga_2=0,1$ by using all four Szeg\H{o} transforms
$\text{Sz}^{(\text{o})}$, $\text{Sz}^{(\text{e})}$,
$\text{Sz}^{(\pm)}$, correspondingly.
\end{remark}

To complete the examples section consider the case $N=1$, that is,
the spectral measure has a mass point outside $[-2,2]$. Now the
Szeg\H{o} transforms do not work, so we proceed in two steps. First,
we compute the Jacobi parameters $a_n, b_n$ for $\s_1=c\s_{ac}$
(properly normalized), and then we add a mass point and recompute
the Jacobi parameters (cf., e.g., \cite[Lemma 7.15]{Nev79}).
\begin{example}
Let $\ga_1=\ga_2=0$, $N=1$, $z_1\in (0,1)$ and $\mu_1>0$ are given,
and $s(t)=t^2$. We have
$$ B^2(t)=\(\frac{t-z_1}{1-z_1t}\)^2=t^2\(\frac{1-z_1\bar t}{1-z_1t}\)^2, $$
so
$$ s(t)=B^2(t)\frac{D(\bar t)}{D(t)}\,, \qquad
D(z)=\frac{c_0}{(1-z_1z)^2}\,.
$$ The spectral
measure is of the form
$$ \s(dx)=\frac{c_0^2\sqrt{4-x^2}}{2\pi(1-z_1x+z_1^2)^2}\,dx+\s_1\,\de(\la_1),
\quad \la_1=z_1+\frac1{z_1} $$
with
$$ \s_1=\mu_1\,\left|\frac{D(z_1)}{B'(z_1)}\right|^2\,|1-z_1^{-2}|^2=
c_0^2\frac{\mu_1}{z_1^4}\,. $$
$c_0$ is determined from $\s(\bbR)=1$, or
$$ c_0^{-2}=\int_{-2}^2 f_0(x)dx+\frac{\mu_1}{z_1^4},
\qquad f_0(x)=\frac{\sqrt{4-x^2}}{2\pi(1-z_1x+z_1^2)^2}\,. $$

Take $\s_0(dx)=c_1^2f_0dx$, the measure on $[-2,2]$, $\s_0(\bbR)=1$.
It is not hard to find the values of normalizing constants $c_0,
c_1$:
$$ c_0^{-2}=\frac1{1-z_1^2}+\frac{\mu_1}{\z_1^4}\,, \quad
c_1^{-2}=\int_{-2}^2\frac{\sqrt{4-x^2}}{2\pi(1-z_1x+z_1^2)^2}\,dx=\frac1{\sqrt{1-z_1^2}}\,.
$$ If
$\mu_1(dt)=w_1(t)m(dt)$ and $\s_0=\text{Sz}^{(\text{o})}(\mu_1)$,
then $w_1=c|1-z_1t|^{-4}$, and as in Example \ref{ex3} (with
$a=b=z_1$) \begin{equation}\label{par} b_1(\s_0)=2z_1, \ \
b_2(\s_0)=\ldots=0; \quad a^2_1(\s_0)=1-z_1^2,\ \
a_2(\s_0)=\ldots=1.
\end{equation} Note that $\s_0$ is the Bernstein--Szeg\H{o} measure
on $[-2,2]$ (see \cite[Section II.2.6]{Sze}), and the corresponding
orthonormal polynomials are known explicitly
$$
p_n(\s_0,z+z^{-1})=\frac{z^{n+1}\(1-\frac{z_1}{z}\)^2-z^{-n-1}(1-z_1z)^2}
{\sqrt{1-z_1^2}\,(z-z^{-1})}\,, \quad n=1,2,\ldots, \quad p_0=1. $$
In particular, $p_k(\s_0,\la_1)=\sqrt{1-z_1^2}\,z_1^{-k}$,
$k=1,2,\ldots$. The Christoffel kernels are
$$ K_{n+1}(\s_0,\la_1)=\sum_{k=0}^n
p^2_k(\s_0,\la_1)=z_1^{-2n}, \quad n=0,1,\ldots,\quad K_0=0.
$$

To apply Nevai's formulae write the perturbed measure $\s$ in a
canonical form
\begin{equation}\label{canon}
\s(dx)=\frac{\s_0(dx)+\ep\de(\la_1)}{1+\ep}\,, \quad
\ep=\frac{c_1^2}{c_0^2}-1=\frac{\mu_1}{z_1^4}(1-z_1^2),
\end{equation} so
\begin{equation*}
\begin{split}
 a^2_n(\s)= & a^2_n(\s_0)\,\frac{(1+\ep K_{n-1}(\s_0,\la_1))(1+\ep
K_{n+1}(\s_0,\la_1))}{(1+\ep K_{n}(\s_0,\la_1))^2}\,, \\
b_{n}(\s) =& b_{n}(\s_0)-a_{n-1}(\s_0)V_{n-1}+a_{n}(\s_0)V_{n},
\quad n=1,2,\ldots,
\end{split}
\end{equation*}
where
$$
V_n = \frac{\ep p_{n-1}(\s_0,\la_1)p_n(\s_0,\la_1)}{1+\ep
K_{n}(\s_0,\la_1)}=\left\{%
\begin{array}{ll}
    \frac{\ep(1-z_1^2)}{z_1(z_1^{2n-2}+\ep)}, & \hbox{$n\ge2$;} \\
    \frac{\ep\,\sqrt{1-z_1^2}}{z_1(1+\ep)}, & \hbox{$n=1$.} \\
\end{array}%
\right.,\quad V_0=0. $$ Eventually, we have
\begin{equation*}
\begin{split}
a^2_1(\s)=&(1-z_1^2)^2\,\frac{1+\mu_1(1+z_1^{-2})}{(1+\mu_1z_1^{-4}(1-z_1^2))^2}\,,\\
a^2_n(\s) =& 1+\frac{\ep(1-z_1^2)^2}{(z_1^{2n}+\ep
z_1^2)^2}\,z_1^{2n}, \quad n=2,3,\ldots,
\end{split}
\end{equation*}
and
$$ b_n(\s)=\frac{\ep(1-z_1^2)^2}{(\ep+z_1^{2n-2})(\ep+
z_1^{2n-4})}\,z_1^{2n-5}, \qquad n\ge3, $$ $\ep$ from \eqref{canon}.
\end{example}
\begin{remark} The Jost function is now
$$ \phi_0(z)=c(1-z_1z)\(1-\frac{z}{z_1}\), $$
but $J$ is of infinite support. The Jacobi parameters tend to their
limits exponentially fast, cf. \cite[Remark 1.10]{Damsim2}.
\end{remark}

\bibliographystyle{amsplain}

\begin{thebibliography}{24}

\bibitem{Damsim1}
D. Damanik and B. Simon, \textit{Jost functions and Jost solutions
for Jacobi matrices, I. A necessary and sufficient condition for
Szeg\H{o} asymptotics}, Invent. math., 165 (2006), 1--50.

\bibitem{Damsim2}
D. Damanik and B. Simon, \textit{Jost functions and Jost solutions
for Jacobi matrices, II. Decay and analyticity}, IMRN 2006, Article
ID 19396 (2006), 1--32.

\bibitem{Egmite1}
I. Egorova, J. Michor, and G. Teschl, \textit{Scattering theory for
Jacobi operators with quasi-periodic background}, Comm. Math. Phys.
264-3 (2006), 811--842.

\bibitem{Ger80}
J. S. Geronimo, \textit{A relation between the coefficients in the
recurrence formula and the spectral function for orthogonal
polynomials}, Trans. of AMS, 260 (1980), no. 1, 65--82.

\bibitem{Ger94}
J. S. Geronimo, \textit{Scattering theory, orthogonal polynomials,
and $q$-series}, SIAM J. Math. Anal., 25 (1994), no. 2, 392--419.

\bibitem{Gernev}
J. S. Geronimo and P. Nevai, \textit{Necessary and sufficient
conditions relating the coefficients in the recurrence formula to
the spectral function for orthogonal polynomials}, SIAM J. Math.
Anal., 14 (1983), no. 3, 622--637.

\bibitem{Gon75}
A. A. Gonchar, \textit{On convergence of Pad\'eapproximants for
certain classes of meromorphic functions}, Mat. Sb., 97 (1975),
607--629, English translation: Math. USSR Sb., 26 (1975), 555--575.

\bibitem{Gus76}
G. S. Guseinov, \textit{The determination of an infinite Jacobi
matrix from the scattering data}. Soviet. Math. Dokl., 17 (1976),
596--600.

\bibitem{Gus77}
G. S. Guseinov, \textit{The scattering problem for an infinite
Jacobi matrix}. (Russian) Izv. Akad. Nauk Armyan. SSR Ser. Mat. 12
(1977), no. 5, 365--379.

\bibitem{Nev79}
P. Nevai, \textit{Orthogonal Polynomials},Mem. Amer. Math. Soc. 18
(1979), no. 213.

\bibitem{Nik84}
E. M. Nikishin, \textit{The discrete Sturm--Liouville operator and
some problems of function theory}, Trudy Sem. Petrovsk. 10 (1984),
3--77 (Russian), English translation: Soviet Math., 35 (1987),
2679--2744.

\bibitem{Pehyud}
F. Peherstorfer and P. Yuditskii, \textit{Asymptotics of orthogonal
polynomials in the presence of a denumerable set of mass points},
Proc. Amer. Math. Soc. 129 (2001), no. 11, 3213--3220.

\bibitem{Pel}
V. V. Peller, \textit{Hankel Operators and their Applications},
Springer Monographs in Math., Springer--Verlag, NY, 2003.

\bibitem{Ryck2}
Ryckman, E., \textit{A strong Szeg\H o theorem for Jacobi matrices},
Comm. Math. Phys., 271 (2007), no. 3, 791--820.

\bibitem{Ryck3}
Ryckman, E. Erratum: \textit{A strong Szego" theorem for Jacobi
matrices}, Comm. Math. Phys., 275 (2007), no. 2, 581--585.

\bibitem{RyckPhD}
Ryckman, E. \textit{Two spectral equivalences for Jacobi matrices},
PhD theses, UCLA, 2007.

\bibitem{Simon1}
B. Simon, \textit{Orthogonal Polynomials on the Unit
Circle, Part 1: Classical Theory}, AMS Colloquium Series, AMS,
Providence, RI, 2005.

\bibitem{Simon2}
B. Simon, \textit{Orthogonal Polynomials on the Unit
Circle, Part 2: Spectral Theory}, AMS Colloquium Series, AMS,
Providence, RI, 2005.

\bibitem{Sze}
G. Szeg\H{o}, \textit{Orthogonal Polynomials}, Amer. Math. Soc.
Colloq. Publ., Vol. 23, American Mathematical Society, Providence
R.I., 1939; 3rd edition, 1967.


\end{thebibliography}

\end{document}